\newtheorem{thm}{Theorem}[section]
\newtheorem{lem}[thm]{Lemma}
\newtheorem{prop}[thm]{Proposition}
\newtheorem{defn}[thm]{Definition}
\newtheorem{cor}[thm]{Corollary}
\newtheorem{ques}[thm]{Question}
\begin{document}

\title{Higher homotopy groups of Cuntz classes}

\author[AT]{Andrew S. Toms}

\address{Department of Mathematics, Purdue University, 150 N University St, West Lafayette IN 47907, USA \ {\tt atoms@purdue.edu}}

\maketitle

\begin{abstract}

Let $A$ be a unital simple separable exact \mbox{C$^*$-algebra} which is approximately divisible and of real rank zero.  We prove that the set of positive elements in $A$ with a fixed non-compact Cuntz class has vanishing homotopy groups.  Combined with work of S. Zhang for the case of compact elements, this gives a complete calculation of the homotopy groups of Cuntz classes for these algebras. Examples covered include approximately finite-dimensional (AF) algebras and irrational noncommutative tori.  
\end{abstract}

\section{Introduction}\label{intro}

Noncommutative tori are fundamental examples in the theory of operator algebras and noncommutative geometry.   When simple they are amenable to classification via $K$-theory in the sense of Elliott, and they are the primordial instance of a noncommutative differentiable manifold.  The classification of irrational rotation algebras via $K$-theory due to Elliott and Evans was an inflection point in the momentum of Elliott's attendant classification program for nuclear separable simple C$^*$-algebras (\cite{E}, \cite{EE}), and was preceded by the discovery of several important structural properties.  Blackadar, Kumjian and R\o rdam established real rank zero for these algebras through their introduction and application of approximate divisiblity, while Rieffel studied their nonstable $K$-theory (\cite{BKR}, \cite{Ri}).  In particular, Rieffel proved that the higher homotopy groups of the unitary groups of a simple $A_\theta$ were stable and alternated between $K_0(A_\theta)$ and $K_1(A_\theta)$, a manifestation of Bott periodicity.  Building on this work, Zhang later studied the higher homotopy groups of the set $\mathcal{P}(A)$ of nontrivial projections in a simple unital C$^*$-algebra $A$ with cancellation of projections and real rank zero, properties enjoyed by the simple $A_\theta$ (\cite{Z1}).  For these algebras it was shown that these homotopy groups again alternate between $K_0(A)$ and $K_1(A)$.  

%ADVOCACY FOR A RETURN TO THE STUDY OF FINE STRUCTURE IN THE WAKE OF CLASSIFICATION.  SHORT PARAGRAPH REGARDING PROSPECTS FROM CARRION ET AL.

In the sequel we return to the study of simple noncommutative tori (in fact, to a much larger class which contains them naturally) through the lens of an invariant whose importance has come to the fore since the early 2000s, namely, the Cuntz semigroup (\cite{Cuntz1}).  
This invariant can distinguish simple nuclear separable and non-isomorphic C$^*$-algeberas which nevertheless agree on K-theory and traces, a discovery that catalyzed the study of regularity properties in Elliott's classification program ({\cite{ET}, \cite{To1}, \cite{TW3}).  
Roughly speaking, the Cuntz semigroup generalizes the Murray-von Neumann semigroup for projections to the setting of positive elements.
The Cuntz class of a positive element is akin to the Murray-von Neumann equivalence class of a projection, and under the assumption of stable finiteness the two classes agree for projections.  For well-behaved C$^*$-algebras such as those studied by Zhang and herein, the Murray-von Neumann class and the path component of a projection in $\mathcal{P}(A)$ are synonymous, and so Zhang's result can be interpreted as a calculation of the higher homotopy groups of the Cuntz classes of projections.  It is therefore natural to consider what these groups might be for the Cuntz class of a general positive element.  The answer, it turns out, is quite satisfying.

%For a real-valued skew bilinear form $\theta$ on $\mathbb{Z}^m$ $(m \geq 2)$, the noncommutative $m$-torus $A_{\theta}$ is the universal C$^*$-algebra generated by unitary elements $u_x$ for $x \in \mathbb{Z}^m$ subject to the relations $u_y u_x= \exp{(\pi i \theta(x,y))}u_{x+y}$ for all $x,y \in \mathbb{Z}^m$. 

%In his study of the unitary groups of 

For a C$^*$-algebra $A$ and positive $a \in A$ we let $\langle a \rangle$ denote the Cuntz class of $a$ (see Section \ref{prelim}).
\begin{defn}\label{constantclass}
Let $A$ be a C$^*$-algebra and $X$ a compact Hausdorff space.  For each positive element $a$ of $A$ set 
\[
D_{a,X} = \{ f \in C(X,A) \ | \ f \geq 0 \ \mathrm{and} \ \langle f(x) \rangle = \langle a \rangle, \ \forall x \in X \}.
\] 
\end{defn}
\noindent
We will simply write $D_a$ in place of $D_{a,X}$ when the space $X$ is a singleton.

\begin{thm}\label{main}
Let $A$ be a unital simple separable exact \mbox{C$^*$-algebra} which is approximately divisible and of real rank zero.  Let $a \in A$ be a positive element such that $\langle a \rangle$ is not compact in $\mathrm{Cu}(A)$ (if $A$ is stably finite) or nonzero (if $A$ is purely infinite).  It follows that $D_{a,X}$ is path connected for every compact metric space $X$.  
\end{thm}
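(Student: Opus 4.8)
The plan is to fix a convenient base point $a_0\in D_{a,X}$ (a constant function) and to show that \emph{every} $f\in D_{a,X}$ can be joined to $a_0$ by a path inside $D_{a,X}$; path connectedness follows at once. To build $a_0$ I use that $\langle a\rangle$ is non-compact: the classes $\langle(a-1/n)_+\rangle$ increase to $\langle a\rangle$, each $\ll$ the next and each \emph{strictly} below $\langle a\rangle$ (equality at some stage would force $\langle a\rangle$ compact). Real rank zero, together with the comparison theory available for simple approximately divisible algebras (which also forces stable rank one in the stably finite case), lets me interpolate a nested sequence of projections $q_1\le q_2\le\cdots$ in $A$ with $q_n\ne q_{n+1}$, $q_n\ne 1$, and $\sup_n\langle q_n\rangle=\langle a\rangle$. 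Setting $p_n=q_n-q_{n-1}$ (with $q_0=0$) and $a_0=\sum_{n\ge 1}2^{-n}p_n$, a short functional-calculus computation gives $\langle(a_0-t)_+\rangle=\langle q_n\rangle$ for $2^{-n-1}<t<2^{-n}$, so $\langle a_0\rangle=\langle a\rangle$ and $a_0\in D_a\subseteq D_{a,X}$.

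Next I would bring an arbitrary $f\in D_{a,X}$ into a standard form by a preliminary path in $D_{a,X}$. Using real rank zero to interpolate projections between successive cut-downs, the continuity of $f$, lower semicontinuity of the rank functions, and compactness of $X$ to make all choices uniform in $x$, one arranges that for a suitable decreasing sequence $\eta_k\to 0$ the cut-downs $(f-\eta_k)_+$ are, fiberwise and uniformly, Cuntz-dominated by the standard finite stage $q_{n(k)}$ and Cuntz-dominate $q_{n(k)-1}$. In other words $f$ is ``squeezed between the standard finite stages'' of $a_0$ with uniform control. In a stable rank one algebra this stagewise agreement is exactly what yields approximate unitary equivalence $f\approx_u a_0$; the point of the step is to set things up so that this equivalence can be upgraded to an honest homotopy.

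The upgrade is where non-compactness is spent. The unitaries implementing $f\approx_u a_0$ need not be null-homotopic, so I would use the slack in $a_0$'s soft tail: since $\langle a\rangle$ is soft, $a_0$ can be taken to contain infinitely many mutually orthogonal copies of a fixed nonzero soft element, and approximate divisibility supplies enough divisibility in the soft part of $\mathrm{Cu}(A)$ to conjugate by unitaries supported on those copies so as to cancel the $K_1$-class of the implementing unitaries — a relative Eilenberg-swindle carried out one soft slot at a time. Combined with local path connectedness of $D_{a,X}$ near $a_0$ (an element of $D_{a,X}$ norm-close to $a_0=\sum 2^{-n}p_n$ has spectral projections close to the $p_n$ and can be straightened onto $a_0$), this produces a path in $D_{a,X}$ from $f$ to $a_0$. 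For a general compact metric $X$ one realizes $X$ as an inverse limit of finite CW complexes and passes to the limit by a standard argument.

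The main obstacle is the combination of the last two steps: obtaining the uniform stagewise squeezing over all of $X$ simultaneously (a real-rank-zero plus compactness/patching argument), and then converting the resulting approximate unitary equivalence into a genuine path by absorbing the homotopical, $K$-theoretic data into the soft tail. This second part is precisely where approximate divisibility does the essential work — it plays the role that $\mathcal{Z}$-stability plays in the classification program, furnishing enough divisibility to kill the $K_1$-obstruction that makes the \emph{compact} case (Zhang's theorem) genuinely non-simply-connected.
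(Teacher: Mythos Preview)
Your overall architecture matches the paper's --- reduce each $f\in D_{a,X}$ by a path in $D_{a,X}$ to a ``standard form'' $\sum_{i}2^{-i}p_i$, then connect two standard forms using that their projections have matching Murray--von Neumann classes (this second step is indeed where approximate divisibility and the $K_1$-absorption you describe are spent) --- but the first step has a genuine gap. You propose to reach a \emph{fixed} base point $a_0$ by deforming $f$ until its cut-downs $(f-\eta_k)_+$ are squeezed between consecutive $q_n$'s and then invoking approximate unitary equivalence in stable rank one. But the hypothesis $\langle f(x)\rangle=\langle a\rangle$ says nothing about the intermediate classes $\langle(f(x)-\eta)_+\rangle$; two positive elements with the same Cuntz class can have completely different cut-down profiles, so there is no approximate unitary equivalence to invoke, and you supply no mechanism for the ``preliminary path'' that is supposed to repair this. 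Your local-path-connectedness claim near $a_0$ has the same defect: norm-proximity controls only finitely many spectral levels, while membership in $D_{a,X}$ is governed by the uncontrolled tail.

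The paper sidesteps all of this by not aiming at a fixed $a_0$. For each $f$ it builds a \emph{bespoke} $b_f=\sum_i 2^{-i}p_{i,f}$ with the $p_{i,f}$ lying inside the hereditary subalgebra $\overline{f\,C(X,A)\,f}$, via an iterated splitting lemma: whenever $r$ is a projection with $d_\tau(r)<d_\tau(f)$ for all $\tau$, one can find $p\sim r$ and a positive $b\perp p$ in $\overline{f\,C(X,A)\,f}$ with $d_\tau(f)=d_\tau(p)+d_\tau(b)$. Once $b_f$ lives there, the reduction is simply the straight line $h(t)=(1-t)f+tb_f$: it remains in $\overline{f\,C(X,A)\,f}$, forcing $h(t)(x)\precsim f(x)$ pointwise, while $(1-t)f(x)\le h(t)(x)$ gives the reverse subequivalence. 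No cut-down matching, no approximate unitary equivalence, and no inverse-limit reduction on $X$ are needed; the argument runs directly in $C(X,A)$ for arbitrary compact metric $X$. This hereditary-subalgebra trick is the idea your proposal is missing.
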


\noindent
Setting $X = S^n$ in Theorem \ref{main} we obtain the following Corollary:

\begin{cor}\label{homotopyzero}
The homotopy groups $\pi_k(D_a)$ vanish for all $k \geq 1$.
\end{cor}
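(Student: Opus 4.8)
The plan is to prove Theorem \ref{main}, from which Corollary \ref{homotopyzero} follows by a short argument given at the end. Fix $a$ as in the statement and a compact metric space $X$; since path connectedness is equivalent to every point being joinable to one fixed point, it suffices to deform an arbitrary $f \in D_{a,X}$, inside $D_{a,X}$, to a constant map whose value does not depend on $f$. The hypotheses on $A$ enter through: strict comparison of positive elements (so $\mathrm{Cu}(A)$ splits into the compact part $V(A)$ and a soft, ``non-atomic'' part, and is otherwise well behaved); stable rank one in the stably finite case, which turns pointwise Cuntz equivalence into approximate unitary equivalence; approximate divisibility, which supplies almost central copies of matrix algebras approximately commuting with any prescribed finite set; and, decisively, non-compactness of $\langle a\rangle$, which gives $\langle(a-\varepsilon)_+\rangle \ll \langle a\rangle$ \emph{strictly} for every $\varepsilon>0$, hence a nonzero, arbitrarily small positive $c$ with $\langle(a-\varepsilon)_+\rangle + \langle c\rangle \leq \langle a\rangle$ that, by non-atomicity, can be split into arbitrarily many even smaller orthogonal pieces.

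\emph{Step 1.} I would first establish a uniform local-connectedness lemma: there is $\delta>0$, independent of $X$, so that any $f,g\in D_{a,X}$ with $\sup_{x}\|f(x)-g(x)\|<\delta$ are joined by a short path in $D_{a,X}$. This rests on the elementary fact that $t\mapsto\big((1-t)\,\mathrm{id}+t\varphi\big)(b)$ stays in the Cuntz class of $b$ whenever $\varphi$ is continuous and vanishes only at $0$, together with R\o rdam's $\varepsilon$-subequivalence estimates, patched over a finite cover of $X$. \emph{Step 2.} Next, using real rank zero to approximate positive elements by ones of finite spectrum, and Step~1 together with the functional-calculus homotopies above, I would deform $f$ within $D_{a,X}$ into a map $x\mapsto u_x\,a_0\,u_x^{*}$, where $a_0$ is a fixed positive element Cuntz equivalent to $a$ and $x\mapsto u_x\in U(A)$ is continuous: stable rank one converts the pointwise equivalences $f(x)\sim a$ into approximate unitary equivalences, compactness of $X$ and a patching argument make the choice of $u_x$ continuous, and all $u_x$ may be taken in one connected component of $U(A)$.

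\emph{Step 3.} What remains is to annihilate the homotopy class of $x\mapsto u_x$. Here I would use approximate divisibility to install an almost central $M_k$ over the relevant data, and non-compactness to realize inside $A$ an orthogonal splitting $a_0\sim(a_0-\varepsilon)_+\oplus c_1\oplus\cdots\oplus c_k$ with each $c_j$ positive, of arbitrarily small Cuntz class, and $\langle(a_0-\varepsilon)_+\rangle+\sum_j\langle c_j\rangle=\langle a_0\rangle$ \emph{exactly}. Against this matrix room one performs a Whitehead-type rotation $\mathrm{diag}(u,u^{*})\simeq 1$: rotation unitaries act by conjugation, so the entire rotation path stays in $D_{a,X}$, and it transfers the $u_x$-dependence from the $a_0$-corner into the small complementary corners, where it can be shrunk away, leaving the constant map at a fixed element Cuntz equivalent to $a$; letting $\varepsilon\to0$ and taking a limit (legitimate by Step~1) completes the deformation. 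The purely infinite case runs in parallel, the role of the ``room'' being played by the complementary corner $\overline{(1-p_\varepsilon)A(1-p_\varepsilon)}$, itself a large purely infinite algebra that absorbs the unitary twist. The main obstacle is exactly keeping the Cuntz class \emph{identically} $\langle a\rangle$ throughout Steps~2--3: Cuntz classes being highly discontinuous, one cannot naively ``cut down $a_0$ and turn on a complement'', and arranging that the shrinkage of $(a_0-\varepsilon)_+$ and the growth of the complement cancel in the order of $\mathrm{Cu}(A)$ at every parameter value -- and then controlling the $\varepsilon\to0$ limit -- is where strict comparison, the abundance of projections from real rank zero, and the non-atomicity of the soft part of $\mathrm{Cu}(A)$ all have to be brought to bear simultaneously.

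Finally, Corollary \ref{homotopyzero} follows: fix $k\geq1$ and take $a$ as basepoint of $D_a$. Theorem \ref{main} with $X$ a point gives that $D_a$ is path connected, and with $X=S^k$ gives that $D_{a,S^k}$ is path connected, i.e.\ every continuous map $S^k\to D_a$ is freely null-homotopic; since $D_a$ is path connected, a free null-homotopy can be promoted to a based one, so $\pi_k(D_a)=0$.
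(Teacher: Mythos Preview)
Your final paragraph, deriving the Corollary from Theorem~\ref{main}, is correct and is exactly how the paper does it. Your plan for Theorem~\ref{main} itself, however, takes a different route and has a gap I do not see how to fill. The weight-bearing step is Step~1, the uniform local-connectedness lemma: you rely on it again in Step~3 to pass to an $\varepsilon\to0$ limit. But the paper explicitly flags (in the introduction) that Cuntz classes lack any analogue of the ``close projections are homotopic through projections'' principle; the functional-calculus path $t\mapsto((1-t)\,\mathrm{id}+t\varphi)(b)$ moves a single element within its own Cuntz class but does not connect two distinct nearby elements, and R\o rdam's estimates yield subequivalence, not a homotopy. Step~2 has a parallel issue: stable rank one gives pointwise approximate unitary equivalence $f(x)\sim a_0$, but a continuous selection $x\mapsto u_x$ over $X$ is a genuine obstruction-theory problem that ``compactness and patching'' does not by itself resolve. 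Note also that for purely infinite $A$ the paper's argument is one line---the straight segment $(1-t)f+tg$ is everywhere nonzero, hence already in $D_{a,X}$---so no parallel machinery is needed there.

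The paper avoids both difficulties by never aiming for a conjugation form $u_x a_0 u_x^*$ and never invoking norm-local-connectedness. Instead, real rank zero gives $\langle a\rangle=\sup_i\langle q_i\rangle$ along a chain of projections; Lemma~\ref{split} is iterated to carve out, inside the hereditary subalgebra $\overline{fC(X,A)f}$, mutually orthogonal projections $p_i$ with prescribed $K_0$-classes; and $f$ is homotoped to the canonical element $b=\sum_i 2^{-i}p_i$ via the convex path $(1-t)f+tb$, which stays in $D_{a,X}$ because $b\in\overline{fC(X,A)f}$ forces $b(x)\precsim f(x)$ while $\bigoplus_{i\leq n}p_i\sim q_n$ forces $a\precsim b(x)$ (Proposition~\ref{projsum}). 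Two such canonical forms, built from $f$ and $g$ respectively, have termwise Murray--von Neumann equivalent projections by construction, and connecting them is done by importing verbatim the proof of Theorem~1.1 of \cite{To2}, using Lemmas~\ref{K1rep} and~\ref{projhom}. The point is that all norm-fragility of Cuntz classes is bypassed by working inside hereditary subalgebras rather than by any local-connectedness principle; this is the idea missing from your outline.
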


We expect that the conditions of approximate divisibiltiy and real rank zero in Theorem \ref{main} can be replaced with the condition of $\mathcal{Z}$-stability, so that Corollary \ref{homotopyzero} will apply to all algebras which are classifiable in the sense of Elliott.
While $K$-theory is essential to the proof of our main result, it must be employed delicately.  Cuntz classes lack the robustness of \mbox{$K_0$-classes} under norm perturbations.  A line segment joining two projections which are close in norm is easily deformed to a homotopy of projections which is necessarily of constant Murray-von Neumann class, while nothing of the sort holds among positive elements and their Cuntz classes.  Indeed, for a unital C$^*$-algebra $A$, the set of positive elements whose Cuntz class is equal to that of the the unit $1_A$ is dense among all positive elements.

Combining Theorem \ref{main} with Zhang's results from \cite{Z1} we have the following summary:
\vspace*{20mm}
\pagebreak

\begin{cor}\label{cuntzhom}
Let $A$ be a unital simple separable exact \mbox{C$^*$-algebra} which is approximately divisible and of real rank zero. Let $a \in A$ be positive.  It follows that $D_a$ is path connected.  If moreoever, $\langle a \rangle$ is compact in the Cuntz semigroup $\mathrm{Cu}(A)$, then for $k \in \mathbb{N}$ we have:
\[
\pi_k(D_a) = \left\{ \begin{array}{ll} K_0(A), & k \ \mathrm{even} \\ K_1(A), & k \ \mathrm{odd} \end{array} \right.
\]
If, alternatively, $\langle a \rangle$ is non-compact, then $\pi_k(D_a) = 0$ for each $k \geq 1$.
\end{cor}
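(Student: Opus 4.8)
The plan is to split along the stably finite / purely infinite dichotomy. When $A$ is purely infinite, every nonzero positive $a$ meets the hypothesis of Theorem \ref{main}, so $D_a$ is path connected and $\pi_k(D_a)=0$ for $k\ge 1$ by Corollary \ref{homotopyzero}, and there is nothing more to prove. So assume $A$ is stably finite. If $\langle a\rangle$ is non-compact the statement is again immediate from Theorem \ref{main} (applied to a one-point space) together with Corollary \ref{homotopyzero}. The whole content therefore lies in the case that $\langle a\rangle$ is compact, which for the algebras under consideration — using that $A$ has cancellation of projections, a consequence of real rank zero together with the stable rank one enjoyed by simple stably finite approximately divisible algebras — means $\langle a\rangle=\langle p\rangle$ for a projection $p\in A$ (we may assume $p$ nontrivial). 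Put $\mathcal{P}_{[p]}=\{\,q\in A : q\text{ a projection},\ \langle q\rangle=\langle p\rangle\,\}$; cancellation identifies this with the set of projections whose $K_0$-class is $[p]$, that is, with the path component of $p$ in the space of projections studied in \cite{Z1}, so from Zhang's work we know $\mathcal{P}_{[p]}$ is path connected and that $\pi_k(\mathcal{P}_{[p]})$ equals $K_0(A)$ for $k$ even and $K_1(A)$ for $k$ odd. The objective is then to show that the inclusion $\iota\colon\mathcal{P}_{[p]}\hookrightarrow D_a$ is a bijection on path components and induces isomorphisms on all homotopy groups; everything asserted in the Corollary follows at once.

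To compare $D_a$ with $\mathcal{P}_{[p]}$ I would use the support-projection map $r\colon b\mapsto\chi_{(0,\infty)}(b)$ and the affine homotopy $(b,t)\mapsto(1-t)b+t\,\chi_{(0,\infty)}(b)$. For $b\in D_a$ the class $\langle b\rangle=\langle a\rangle$ is compact, and a short cancellation-and-finiteness argument (spelled out below) shows this forces $0$ to be isolated in $\sigma(b)$, so $\chi_{(0,\infty)}(b)\in A$; then $\chi_{(0,\infty)}(b)\sim b$ in the Cuntz sense, so $r$ takes values in $\mathcal{P}_{[p]}$, and since $s\mapsto(1-t)s+t\,\chi_{(0,\infty)}(s)$ vanishes on $\sigma(b)$ only at $0$ we get $(1-t)b+t\,\chi_{(0,\infty)}(b)\in D_a$ for all $t$. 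The one delicate matter is the continuity of $r$ along maps and homotopies into $D_a$, and this I expect to be the principal obstacle; it reduces to the following uniformity statement: for every norm-compact $K\subseteq D_a$ there is an $\eta>0$ with $\sigma(b)\cap(0,\eta)=\emptyset$ for every $b\in K$. Granting it, $r|_K$ agrees with $b\mapsto g(b)$ for a single continuous $g\colon[0,\infty)\to[0,1]$ vanishing near $0$ and equal to $1$ on $[\eta,\infty)$, hence $r|_K$ is norm-continuous; applying this with $K$ the image of a map $S^k\to D_a$ and of a homotopy $S^k\times[0,1]\to D_a$ yields surjectivity and then injectivity of $\iota_*$ on $\pi_k$, while the one-point case shows every $b\in D_a$ is joined in $D_a$ to $r(b)\in\mathcal{P}_{[p]}$, so that path connectedness of $D_a$ descends from that of $\mathcal{P}_{[p]}$.

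It remains to establish the uniformity statement, and this is the one place where stable finiteness is indispensable. I would argue by contradiction. Suppose $b_n\in K$ with $\sigma(b_n)\cap(0,1/n)\neq\emptyset$ and, along a subsequence, $b_n\to b\in K$ in norm. Compactness of $\langle b\rangle=\langle a\rangle$ forces $0$ to be isolated in $\sigma(b)$ — otherwise, comparing the spectral projections $\chi_{[\varepsilon,\infty)}(b)$ at small regular values $\varepsilon$, each of which is Cuntz equivalent to $b$ by compactness, via cancellation and finiteness yields a contradiction — say with a spectral gap of length $\delta>0$; write $q=\chi_{(0,\infty)}(b)=\chi_{[\delta,\infty)}(b)$, so $b\ge\delta q$. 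Norm convergence confines the part of $\sigma(b_n)$ below $\delta/2$ to $[0,\varepsilon_n]$ with $\varepsilon_n\to 0$, so the nonzero spectral projections $e_n:=\chi_{(0,\delta/2]}(b_n)$ satisfy $\|b_ne_n\|\le\varepsilon_n$, hence $\|be_n\|\to 0$ and $\|qe_n\|\le\|be_n\|/\delta\to 0$; thus $e_n$ is asymptotically orthogonal to $q$ and, for $n$ large, Murray--von Neumann equivalent to a nonzero subprojection $e_n'$ of $1-q$. On the other hand $\chi_{(0,\infty)}(b_n)=e_n+\chi_{(\delta/2,\infty)}(b_n)$ with $\chi_{(\delta/2,\infty)}(b_n)\to q$, so for large $n$ the projection $\chi_{(0,\infty)}(b_n)$ is equivalent to $q\oplus e_n'$; but $\chi_{(0,\infty)}(b_n)\sim b_n\sim p\sim q$, so $[e_n']=0$ in $K_0(A)$, and stable finiteness forces $e_n'=0$, a contradiction. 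The points needing care in making this argument precise are the perturbation estimates that upgrade the asymptotic orthogonality and equivalence to honest Murray--von Neumann relations, and the verification that the hypotheses of the Corollary deliver the stable rank one, cancellation of projections, and stable finiteness invoked above; with the uniformity statement in hand, the Corollary follows formally from Theorem \ref{main}, Corollary \ref{homotopyzero}, and \cite{Z1}.
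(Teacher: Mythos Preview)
Your argument is correct. The paper gives no separate proof of this Corollary---it is announced simply as a combination of Theorem~\ref{main} with Zhang's results from \cite{Z1}---so the only nontrivial step, namely that in the stably finite compact case the inclusion $\mathcal{P}_{[p]}\hookrightarrow D_a$ induces isomorphisms on all homotopy groups, is left implicit. The device you use to bridge this (a uniform spectral gap over norm-compact subsets of $D_a$, yielding a continuous support-projection retraction and a straight-line homotopy within $D_a$) is precisely what appears in the first paragraph of the proof of Proposition~\ref{projsum}: there, for $f\in D_{a,X}$ with $\langle a\rangle$ compact, an open-cover argument on the compact space $X$ produces a uniform $\epsilon>0$ with $\sigma(f(x))\cap(0,\epsilon)=\emptyset$ for all $x\in X$, after which Lemma~2.2 of \cite{To2} supplies the homotopy to a projection. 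Taking $X=S^k$ and $X=S^k\times[0,1]$ then gives exactly the surjectivity and injectivity of $\iota_*$ on $\pi_k$ that your retraction argument establishes. Your proof of the uniformity statement by contradiction---using cancellation and stable finiteness to rule out stray spectrum near zero---is more elaborate than the paper's direct compactness argument via continuity of the spectrum for self-adjoint elements, but reaches the same conclusion; indeed a norm-compact $K\subseteq D_a$ is nothing but the image of the tautological element of $D_{a,K}$, so the two formulations are interchangeable.
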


%ADD BIT ABOUT THE PROOF OF THIS COROLLARY TO PROOF SECTION.

Another way to phrase Corollary \ref{homotopyzero} is to say that $D_a$ is weakly contractible when $\langle a \rangle$ is not compact.  It would of course be desirable to upgrade this to genuine contractibility, which would follow from a positive answer to the following question:

\begin{ques}\label{cw}
Does a Cuntz class $D_a$ as in Theorem \ref{main} have the homotopy type of a CW-complex?
\end{ques}

The paper is organized as follows:  Section \ref{prelim} reviews essential facts about the Cuntz semigroup and approximate divisibility;  Section \ref{redux} reduces the proof of Theorem \ref{main} to the case where $a$ is a sum of scaled orthogonal projections;  Section \ref{mainproof} completes the proof of Theorem \ref{main}.
%OPEN SUBSET OF BANACH SPACE THING,  WHAT ABOUT PROJECTIONS?

\section{Preliminaries}\label{prelim}

We survey briefly those aspects of the Cuntz semigroup essential to the sequel, and refer the reader to \cite{APT} for a detailed exposition.  Let $A$ be a $C^*$-algebra and let $A_+$ denote the subset of positive elements.  If $a,b \in A_+$, we write $a \precsim b$ if there is a sequence $(v_n)$ in $A$ such that $v_n b v_n^* \to a$ with convergence in norm.  If $a \precsim b$ and $b \precsim a$ then we write $a \sim b$ and say that $a$ and $b$ are Cuntz equivalent.  Set $\mathrm{Cu}(A) =( A \otimes \mathcal{K})_+ / \sim$, and let $\langle a \rangle$ denote the equivalence class of $a$ in $\mathrm{Cu}(A)$.  Now 
\[
\langle a \rangle+\langle b \rangle := \left\langle \left[ \begin{array}{cc} a & 0 \\ 0 & b \end{array} \right] \right\rangle
\]
defines addition on $\mathrm{Cu}(A)$ upon identifying $\mathcal{K}$ with $M_2(\mathcal{K})$, and 
\[
\langle a \rangle \leq \langle b \rangle \Leftrightarrow a \precsim b
\]
defines a partial order.
These make $\mathrm{Cu}(A)$ into an ordered semigroup, the Cuntz semigroup.

Suprema for increasing sequences always exist in $\mathrm{Cu}(A)$ (\cite{CEI}).  We say that $\langle a \rangle$ is compactly contained in $ \langle b \rangle$, written $\langle a \rangle \ll \langle b \rangle$, if whenever $(\langle b_i \rangle)$ is increasing and $\sup \langle b_i \rangle \geq \langle b \rangle$ we have $\langle a \rangle \leq \langle b_{i_0}\rangle$ for some $i_0 \in \mathbb{N}$.  An element $\langle a \rangle$ of $\mathrm{Cu}(A)$ is called compact if $\langle a \rangle \ll \langle a \rangle$.  If $A$ is simple or of stable rank one, then $\langle a \rangle$ is compact if and only if $a \sim p$ for some projection $p$ (\cite{APT}, \cite{BPT}).  Recall that $A$ has real rank zero if every self-adjoint element in $A$ can be approximated in norm by self-adjoint elements with finite spectrum.  Perera shows that consequently, for such algebras, each $\langle a \rangle \in \mathrm{Cu}(A)$ is seen to be the supremum of a sequence of compact elements by exploiting (i) and (iii) below (\cite{P}).

Let $\delta>0$, and define a continuous map $g_\epsilon: \mathbb{R} \to [0,\infty)$ by 
\[
g_\epsilon(t) = \max \{ 0,t-\epsilon \}.
\]
Set $(a-\epsilon)_+ = g_\epsilon(a)$.  We record some useful facts related to this construction (see \cite{APT}):
\begin{enumerate}
\item[(i)] $a \precsim b$ if and only if $(a-\epsilon)_+ \precsim b$, for every $\epsilon >0$;
\item[(ii)] $\langle (a-\epsilon)_+ \rangle \ll \langle a \rangle$ for every $\epsilon >0$;
\item[(iii)] If $\|a-b\| < \epsilon$, then $(a-\epsilon)_+ \precsim b$ for every $\epsilon >0$;
\item[(iv)] If $a \precsim b$ then for every $\epsilon>0 $ there is $\delta >0$ such that \\ $(a-\epsilon)_+ \precsim (b-\delta)_+$.
\end{enumerate}
The next Lemma is Proposition 2.7 (i) of \cite{KR}.
\begin{lem}\label{herrep}
Let $A$ be a C$^*$-algebra with $a,b \in A_+$.  If $b \in \overline{aAa}$, then $b \precsim a$.
\end{lem}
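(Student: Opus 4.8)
The plan is to combine two elementary facts: that $\overline{aAa}$ is a $C^*$-subalgebra of $A$, so it is stable under continuous functional calculus, and the identity $x^*a^2x=(a^{1/2}x)^*\,a\,(a^{1/2}x)$, which converts a ``two-sided'' product into an honest conjugate of $a$.

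First I would check that $\overline{aAa}$ is a $C^*$-algebra. The set $aAa=\{aya:y\in A\}$ is a $\ast$-subalgebra of $A$: it is closed under products since $(ay_1a)(ay_2a)=a(y_1a^2y_2)a$, and under the adjoint since $a=a^*$. Hence its norm closure $\overline{aAa}$ is a $C^*$-subalgebra of $A$. Since $b$ lies in this $C^*$-algebra and $b\geq 0$, continuous functional calculus gives $b^{1/2}\in\overline{aAa}$ as well. Writing $b^{1/2}$ as a norm limit of elements $ay_na$ and putting $x_n=y_na\in A$, we obtain a sequence $(x_n)$ in $A$ with $ax_n\to b^{1/2}$.

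Next I would pass to Cuntz subequivalence. Because multiplication and the adjoint are norm continuous, $ax_n\to b^{1/2}$ forces
\[
x_n^*a^2x_n=(ax_n)^*(ax_n)\ \longrightarrow\ (b^{1/2})^*b^{1/2}=b .
\]
Setting $v_n=x_n^*a^{1/2}\in A$, the displayed identity reads $v_nav_n^*=x_n^*a^2x_n\to b$, which is exactly the assertion $b\precsim a$.

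I do not expect a genuine obstacle here — this is a foundational lemma — but the one point that merits care is the very first step: that $\overline{aAa}$ really is a $C^*$-algebra, so that it contains $b^{1/2}$; everything after that is a two-line manipulation. One could instead avoid the hereditary subalgebra and argue directly that $b^{1/2}$ lies in the closed right ideal $\overline{aA}$, using that $b^{3/2}(b+1/n)^{-1}\to b^{1/2}$ in the unitization of $A$, but invoking the $C^*$-algebra $\overline{aAa}$ is cleaner.
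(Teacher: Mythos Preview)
Your argument is correct. The paper does not supply its own proof of this lemma; it simply cites Proposition~2.7(i) of Kirchberg--R\o rdam, so there is nothing to compare against beyond noting that your route---pulling $b^{1/2}$ back into $\overline{aAa}$ via functional calculus and then conjugating $a$ by $x_n^*a^{1/2}$---is the standard elementary proof.
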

%\begin{proof}
%Let $\epsilon >0$ be given.  The sequence $(a^{1/k})$ is an approximate unit for $\overline{aAa}$ so we can find $n \in \mathbb{N}$ such that $\| b^{1/2} a^{1/n} b^{1/2} -b \| < \epsilon$.  It follows from (iii) above that 
%\[
%(b-\epsilon)_+ \precsim b^{1/2} a^{1/n} b^{1/2} \precsim a^{1/n}.
%\]
%We also have $a \sim a^{1/n}$ by Proposition 2.5 of \cite{APT}.  Since $\epsilon$ was arbitrary, the Lemma follows from (i) above.
%\end{proof}
%\noindent

%We record one more technical Lemma for use in the sequel.  
%\begin{lem}\label{projrep}
%Let $A$ be a C$^*$-algebra and let $0 \neq a \in A_+$.  Suppose that zero is an isolated point of the spectrum of $a$.  It follows that there are a homotopy $h:[0,1] \to \mathrm{C}^*(a)$ and a projection $p \in \mathrm{C}^*(a)$ such that $h(0) = a$, 
%$h(1)=p$, and $\langle h(t) \rangle= \langle a \rangle$ for every $t \in [0,1]$.
%\end{lem}
%\begin{proof}
%Let $\epsilon >0$ be the smallest nonzero element of the spectrum $\sigma(a)$.  Define $h(t) = a^{1-t}$ for $t \in [0,1)$ and $p=h(1) = \chi_{[\epsilon,\infty)}(a)$.  This is clearly a homotopy inside $\mathrm{C}^*(a)$, and $\langle h(t) \rangle= \langle a \rangle$ by Proposition 2.5 of \cite{APT}.
%\end{proof}

Let $A$ be unital and exact, and let $\mathrm{T}(A)$ denote the space of tracial states on $A$.  For $\tau \in \mathrm{T}(A)$ and $a \in (A \otimes \mathcal{K})_+$ we recall that
\[
d_\tau(a) = \lim_{n \to \infty} \tau(a^{1/n}).
\]
defines a lower semicontinuous dimension function on $A$.  We say that $A$ has strict comparison of positive elements if $a \precsim b$ whenever
\[
d_\tau(a) < d_\tau(b), \ \forall \tau \in \mathrm{T}(A).
\]
(Strictly speaking we should consider all possible $\tau$ belonging to the set of 2-quasitraces on $A$, denoted by $QT(A)$, but we are assuming that $A$ is exact so that $QT(A)=T(A)$ by a result of Haagerup---see \cite{H}.)

A unital separable C$^*$-algebra $A$ is said to be {\it approximately divisible} if there exists, for each $N \in \mathbb{N}$, an approximately central sequence of unital $*$-homomorphisms $\phi_N: M_N(\mathbb{C}) \oplus M_{N+1}(\mathbb{C}) \to A$ (\cite{BKR}).  It is known that unital simple infinite-dimensional AF algebras are approximately divisible, as are simple noncommutative tori, and that the latter have real rank zero (\cite{BKR}, Theorem 1.5).  Approximately divisible C$^*$-algebras are $\mathcal{Z}$-stable, and simple $\mathcal{Z}$-stable C$^*$-algebras have strict comparison of positive elements (\cite{Ro1}, \cite{TW2}).  It is noted in \cite{BKR} that if $A$ is approximately divisible then so is $A \otimes B$ for separable $B$, whence 
\[
C(X,A) \cong C(X) \otimes A
\]
is approximately divisible for every compact metric space $X$.

\section{Reduction to sums of projections}\label{redux}

The next result follows from Lemma 3.2 of \cite{To2} and Proposition 3.9 of \cite{A}.  The latter result is a Corollary of Proposition 3.2 of \cite{Betc}.

\begin{lem}\label{split}
Let $A$ be a unital simple separable exact $\mathcal{Z}$-stable C$^*$-algebra which is moreover (residually) stably finite, and let $X$ be a compact metric space.  Suppose that $f \in C(X,A)$ is positive, and that $q \in C(X,A) \otimes \mathcal{K}$ is a projection satisfying $d_\tau(q) < d_\tau(f)$ for every $\tau \in \mathrm{T}(C(X,A)$.  It follows that there are a projection $p \sim q$ and a positive element $b$ in $\overline{f(C(X,A))f}$ such that $bp = pb =0$ and
\[
d_\tau(a) = d_\tau(b) + d_\tau(p), \ \forall \tau \in \mathrm{T}(C(X,A)).
\]
\end{lem}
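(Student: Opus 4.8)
The plan is to assemble the statement from the two cited results, so the real work is in identifying what each contributes and checking that the hypotheses of the Lemma feed them correctly. First I would record that $C(X,A) \cong C(X) \otimes A$ is again $\mathcal{Z}$-stable and (residually) stably finite, hence has strict comparison of positive elements with respect to the lower semicontinuous dimension functions $d_\tau$, $\tau \in \mathrm{T}(C(X,A))$ (and $C(X,A)$ is exact, so there is no distinction between traces and $2$-quasitraces). Using this I would promote the hypothesis $d_\tau(q) < d_\tau(f)$ for all $\tau$ to the compact containment $\langle q \rangle \ll \langle f \rangle$ in $\mathrm{Cu}(C(X,A))$: if $(\langle b_i \rangle)$ is increasing with $\sup_i \langle b_i \rangle \geq \langle f \rangle$, then since each $d_\tau$ preserves suprema of increasing sequences in $\mathrm{Cu}$, the sets $U_i = \{ \tau : d_\tau(b_i) > d_\tau(q) = \tau(q) \}$ are open (lower semicontinuity of $\tau \mapsto d_\tau(b_i)$ minus continuity of $\tau \mapsto \tau(q)$), increasing, and exhaust the compact space $\mathrm{T}(C(X,A))$; hence $U_{i_0} = \mathrm{T}(C(X,A))$ for some $i_0$, and strict comparison then yields $\langle q \rangle \leq \langle b_{i_0} \rangle$.

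With $\langle q \rangle \ll \langle f \rangle$ in hand, the Cuntz-theoretic input does two things. Lemma 3.2 of \cite{To2} supplies a positive element $b_0 \in \overline{f(C(X,A))f}$ realizing the complement of $\langle q \rangle$ inside $\langle f \rangle$ at the level of the Cuntz semigroup, \ie $\langle q \rangle + \langle b_0 \rangle = \langle f \rangle$. Then, because the summand $\langle q \rangle$ is compact (as $q$ is a projection), Proposition 3.9 of \cite{A} (itself a consequence of Proposition 3.2 of \cite{Betc}) lets one realize this splitting internally and orthogonally: there are an honest projection $p$ with $p \sim q$ and a positive element $b \sim b_0$, both lying in $\overline{f(C(X,A))f}$, with $pb = bp = 0$. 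The dimension-function identity is then routine bookkeeping: orthogonality gives $\langle p + b \rangle = \langle p \rangle + \langle b \rangle = \langle q \rangle + \langle b_0 \rangle = \langle f \rangle$ and also $d_\tau(p + b) = d_\tau(p) + d_\tau(b)$, whence $d_\tau(f) = d_\tau(p) + d_\tau(b)$ for every $\tau \in \mathrm{T}(C(X,A))$.

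The step I expect to be the main obstacle — and the reason this is a genuine assembly rather than a one-line citation — is the passage from the abstract Cuntz identity $\langle q \rangle + \langle b_0 \rangle = \langle f \rangle$ to a concrete orthogonal pair $(p,b)$ with $p$ an actual projection Murray--von Neumann equivalent to $q$ and sitting inside $\overline{f(C(X,A))f}$. Cuntz comparison by itself yields neither orthogonality nor projective representatives, and bridging that gap is exactly what \cite{Betc} and \cite{A} are engineered to do. It is also where (residual) stable finiteness is essential: it is what guarantees the compact element $\langle q \rangle$ is represented by a projection and can be \emph{subtracted} from $\langle f \rangle$ inside the hereditary subalgebra, rather than merely dominated by $f$; one should also be mildly careful that the versions of these comparison and splitting results being invoked are the ones valid for the non-simple algebra $C(X,A)$, which is again why $\mathcal{Z}$-stability rather than simplicity is hypothesized. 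Once those structural results are granted, the remainder is the dimension-function manipulation indicated above.
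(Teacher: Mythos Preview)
Your overall shape is right---one needs strict comparison for $C(X,A)$ and then an orthogonal splitting inside the hereditary subalgebra---but you have swapped the roles of the two cited inputs, and this creates a real gap. The passage from $\mathcal{Z}$-stability of $C(X,A)$ to strict comparison is \emph{not} automatic: R{\o}rdam's result \cite{Ro1} applies to simple algebras, and $C(X,A)$ is not simple. Establishing strict comparison for $C(X,A)$ is precisely what Proposition~3.9 of \cite{A} (resting on Proposition~3.2 of \cite{Betc}) supplies; those are statements about the radius of comparison of $C(X)\otimes A$, not about realizing Cuntz decompositions orthogonally. So your opening clause ``hence has strict comparison'' is exactly the step that needs \cite{A}, not a preliminary you can take for granted.

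Conversely, the orthogonal realization---finding a genuine projection $p\sim q$ and an orthogonal positive $b$ inside $\overline{f(C(X,A))f}$---is not in \cite{A} or \cite{Betc} at all. That is the content of Lemma~3.2 of \cite{To2} (and the proof of its companion lemma), originally stated for simple $A$; the paper's point is that once strict comparison gives $q\precsim f$, that proof goes through verbatim with $C(X,A)$ in place of $A$, simplicity never being used. Your proposal instead invokes \cite{A} and \cite{Betc} for a splitting theorem they do not contain, so as written the crucial step is unsupported. A minor aside: your open-cover argument on $\mathrm{T}(C(X,A))$ to get $\langle q\rangle\ll\langle f\rangle$ is unnecessary---once strict comparison yields $q\precsim f$, compactness of the projection class $\langle q\rangle$ already gives $\langle q\rangle\ll\langle f\rangle$.
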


\begin{proof}
We observe first that the assumption of simplicity in Lemma 3.2 of \cite{To2} can be dropped. By Proposition 3.9 of \cite{A} we have that $C(X,A)$ enjoys strict comparison of positive elements, whence $q \precsim f$.  One now follows the proof of Lemma 3.1 of \cite{To2} verbatim with $C(X,A)$ in place of $A$.
\end{proof}

The following Proposition generalizes Proposition 3.3 of \cite{To2}.

\begin{prop}\label{projsum}
Let $A$ be a unital simple separable exact C$^*$-algebra which is $\mathcal{Z}$-stable and of real rank zero.  Suppose further that $A$ is stably finite, and let $X$ be a compact metric space.  Let $0 \neq a \in A $ be positive, and let $f \in C(X,A)$ be positive and satisfy $\langle f(x) \rangle = \langle a \rangle$ for every $x \in X$.  It follows that there is a sequence $p_1,p_2,\ldots$ of mutually orthogonal projections in $\overline{f((C(X,A))f}$ such that with
\[
b = \bigoplus_{i=1}^\infty \frac{1}{2^i} p_i,
\]
we have a homotopy $h:[0,1] \to (\overline{f(C(X,A))f})_+$ satisfying $h(0) = f$, $h(1) = b$, and $\langle h(t)(x) \rangle = \langle a \rangle \in \mathrm{Cu}(A)$ for each $t \in [0,1]$ and $x \in X$.  If $\langle a \rangle$ is non-compact then we may moreover assume that 
\[
0 < \tau(p_i) < d_\tau(f), \ \forall \tau \in \mathrm{T}(C(X,A)).
\]

\end{prop}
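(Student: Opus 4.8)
The plan is to realize $f$ as the supremum (in an appropriate sense) of a rapidly increasing sequence of compact Cuntz classes, peel these off one at a time as orthogonal projections using Lemma \ref{split}, and then glue the pieces together via a homotopy that never leaves the Cuntz class $\langle a\rangle$. First I would invoke real rank zero together with Perera's observation (cited in Section \ref{prelim}) to write $\langle f\rangle$ as the supremum of an increasing sequence of compact elements; concretely, since $(f-\epsilon)_+\precsim f$ for all $\epsilon$ and $\langle(f-\epsilon)_+\rangle\ll\langle f\rangle$ by facts (i) and (ii), and since real rank zero lets us approximate $(f-\epsilon)_+$ by positive elements with finite spectrum (hence by projections in $\overline{f(C(X,A))f}$ after a further small perturbation using (iii) and Lemma \ref{herrep}), I obtain projections $q_n\in\overline{f(C(X,A))f}\otimes\mathcal K$ with $\langle q_n\rangle\ll\langle f\rangle$ and $\sup_n\langle q_n\rangle=\langle f\rangle$. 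Passing to a subsequence I can arrange $d_\tau(q_n)$ strictly increasing in $n$ for all $\tau\in\mathrm T(C(X,A))$ (using compactness of $\mathrm T(C(X,A))$ and lower semicontinuity), and with $d_\tau(q_n)<d_\tau(f)$.

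Next I would extract the orthogonal projections $p_i$ inductively. Apply Lemma \ref{split} to the pair $(f,q_1)$ to split off a projection $p_1\sim q_1$ inside $\overline{f(C(X,A))f}$ together with an orthogonal positive complement $b_1$ with $d_\tau(f)=d_\tau(b_1)+d_\tau(p_1)$; by the strict comparison available here (from $\mathcal Z$-stability, via Proposition 3.9 of \cite{A}) and the fact that $\langle p_1\rangle+\langle b_1\rangle$ has the same dimension functions as $\langle f\rangle$, one gets $\langle p_1\rangle+\langle b_1\rangle=\langle f\rangle$ in $\mathrm{Cu}(C(X,A))$; moreover since $\langle p_1\rangle\ll\langle f\rangle$ is a proper summand one checks $\langle f(x)\rangle=\langle b_1(x)\rangle=\langle a\rangle$ for each $x$ (the projection contributes nothing to the non-compact part of the Cuntz class, which must therefore already sit inside $b_1$ — here is where noncompactness of $\langle a\rangle$ gets used, and the case of compact $\langle a\rangle$ would require a slightly different bookkeeping). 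Then repeat with $b_1$ in place of $f$, comparing against $q_2$, to obtain $p_2\perp p_1$, and so on. To guarantee the series $b=\bigoplus\frac1{2^i}p_i$ actually has Cuntz class $\langle a\rangle$ rather than something strictly smaller, I would choose the $q_n$ so that the "leftover" after $n$ steps still has full dimension functions $d_\tau(b_n)=d_\tau(f)$ — this is automatic when $\langle a\rangle$ is non-compact and each $d_\tau(q_n)$ is a proper fraction of $d_\tau(f)$ — and then argue $\langle b\rangle=\sup_n\langle\bigoplus_{i\le n}\frac1{2^i}p_i\rangle=\sup_n\langle p_1\rangle+\cdots$ recovers all of $\langle f\rangle=\langle a\rangle$ by strict comparison applied to the suprema. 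The final inequality $0<\tau(p_i)<d_\tau(f)$ is then just the recorded strict inequality $d_\tau(q_i)<d_\tau(f)$ transported along $p_i\sim q_i$, together with $\tau(p_i)>0$ coming from $\langle p_i\rangle\ne 0$ in the simple algebra.

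The homotopy is the last ingredient: I would first connect $f$ to $p_1\oplus b_1$. The standard device (as in Proposition 3.3 of \cite{To2}) is that if $c=d+e$ with $d,e$ orthogonal positives sitting inside $\overline{f(C(X,A))f}$ and $\langle c\rangle=\langle d\rangle+\langle e\rangle=\langle f\rangle$, then $t\mapsto $ (suitable path through $\overline{f(C(X,A))f}$ interpolating $f$ and $d\oplus e$, e.g.\ via unitaries conjugating a fixed resolution and rescaling) stays of constant Cuntz class because all intermediate elements dominate and are dominated by the same two classes $\langle d\rangle,\langle e\rangle$ whose sum is $\langle f\rangle$. Iterating, I get a homotopy from $f$ to $\bigoplus_{i=1}^n\frac1{2^i}p_i\oplus b_n$ for each $n$, with the scaling factors $1/2^i$ chosen so these concatenate on time intervals $[1-2^{-n+1},1-2^{-n}]$ into a single continuous path $h:[0,1]\to(\overline{f(C(X,A))f})_+$ with $h(1)=b$; continuity at $t=1$ follows because the tails $\bigoplus_{i>n}\frac1{2^i}p_i\oplus b_n$ go to $0$ in norm by the $2^{-i}$ weighting, while constancy of $\langle h(t)(x)\rangle$ is preserved in the limit since Cuntz subequivalence is a closed-enough relation along these specific paths (each $h(t)$ lies between $\langle a\rangle$ from above and, for $t<1$, has a compact piece bounded away plus a piece Cuntz equivalent to the relevant $b_n(x)$ of class $\langle a\rangle$).

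The main obstacle I expect is precisely the continuity and constant-Cuntz-class control of the homotopy at the endpoint $t=1$: the rescaling by $1/2^i$ makes norm-continuity of $h$ at $1$ routine, but ensuring that the Cuntz class of $h(t)(x)$ does not jump in the limit — given that Cuntz equivalence is notoriously \emph{not} stable under norm perturbations, as the paper itself emphasizes — requires that each approximating path be built from honest unitary conjugations and exact orthogonal decompositions inside the hereditary subalgebra, never from naive convex combinations. Getting the bookkeeping of the $p_i$, the residuals $b_n$, and the time-reparametrization to cohere so that this rigidity is maintained uniformly in $x\in X$ is the technical heart of the argument.
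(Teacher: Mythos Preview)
Your inductive construction of the $p_i$ has a bookkeeping error that breaks the argument. You peel off $p_n\sim q_n$ at each stage, where the $q_n$ are an \emph{increasing} sequence of projections with $\sup_n\langle q_n\rangle=\langle a\rangle$; but then $d_\tau(b_n)=d_\tau(f)-\sum_{i\le n}d_\tau(q_i)$, which is certainly not equal to $d_\tau(f)$ as you claim (that would force all $p_i=0$), and there is no reason for $\sum_{i\le n+1}d_\tau(q_i)<d_\tau(f)$, so the next application of Lemma~\ref{split} need not be available. Relatedly, the assertion $\langle b_1(x)\rangle=\langle a\rangle$ is false: removing a nonzero projection strictly lowers every $d_\tau$ in a stably finite algebra. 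The paper repairs this by first using weak unperforation of $K_0(A)$ to write $[q_i]=[q_{i-1}]+[r_i]$ and then peeling off $p_i\sim r_i$ (the \emph{differences}) rather than $p_i\sim q_i$. This makes the partial sums $\oplus_{i\le n}p_i\sim q_n$, so the induction goes through (since $d_\tau(q_n)<d_\tau(f)$) and $d_\tau(b(x))=\sup_n d_\tau(q_n(x))=d_\tau(a)$, whence $\langle b(x)\rangle=\langle a\rangle$ by strict comparison and non-compactness.

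More importantly, the homotopy you identify as ``the technical heart of the argument'' is a non-issue: the paper simply takes the straight line $h(t)=(1-t)f+tb$. Since $b\in\overline{f(C(X,A))f}$, every $h(t)$ lies in that hereditary subalgebra, so $h(t)(x)\precsim f(x)\sim a$ by Lemma~\ref{herrep}; and since $tb\ge 0$ we have $(1-t)f\le h(t)$, so $a\sim f(x)\sim(1-t)f(x)\precsim h(t)(x)$. That is the entire homotopy argument---no concatenation, no endpoint analysis, no unitaries. Your worry that ``Cuntz equivalence is not stable under norm perturbations'' and that one must avoid ``naive convex combinations'' is misplaced here: the convex combination works precisely because one endpoint sits inside the hereditary subalgebra generated by the other.
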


\begin{proof}

If $\langle a \rangle = \langle f(x) \rangle \in \mathrm{Cu}(A) $ is compact, then for each $x \in X$ there is $\epsilon_x > 0$ such that 
\[
\langle (f(x)-\epsilon_x)_+ \rangle  \sim \langle f(x) \rangle.
\]
Since $A$ is simple, we conclude that the interval $(0,\epsilon_x)$ does not meet the spectrum $\sigma(f(x))$ of $f(x)$ (\cite{BPT}).  It then follows from the continuity of \mbox{$f: X \mapsto A$} that there is an open set $U_x$ containing $x$ such that the interval $(0,\epsilon_x/2)$ does not meet $\sigma(f(y))$ for each $y \in U_x$, whence
\[
\langle (f(y) - \epsilon_x/2)_+ \rangle \sim \langle f(y) \rangle, \ \forall y \in U_x.
\]
By the compactness of $X$ we can find $x_1,\ldots,x_n \in X$ such that 
\[
\cup_{i=1}^n U_{x_i} = X.
\]
Setting $\epsilon = \min_i \{\epsilon_i\}$ we have that
\[
\langle (f(x) - \epsilon)_+ \rangle \sim \langle f(x) \rangle, \ \forall x \in X.
\]
By construction, we have that the spectrum of each $f(x)$ does not intersect $(0,\epsilon)$, i.e., that zero is an isolated point of the spectrum of $f$.  Lemma 2.2 of \cite{To2} then yields the desired homotopy between $f(x)$ and a projection $p \in C(X,A)$. 

Assume, then, that $\langle a \rangle$ is non-compact.  Real rank zero implies the existence of a sequence $q_1,q_2,\ldots$ of projections in $A \otimes \mathcal{K}$ such that $q_i \neq q_{i+1}$, $\langle q_i \rangle \ll \langle q_{i+1} \rangle$, and 
\begin{equation}\label{asup}
\sup_i \langle q_i \rangle = \langle a \rangle
\end{equation}
\noindent
(see \cite{P}).  Since $A$ is $\mathcal{Z}$-stable, the $\mathrm{K}_0$-group of $A$ is weakly unperforated (\cite{BKR}).  It follows that 
\[
[ q_{i} ] = [ q_{i-1} ]  + [ r_i ]
\]
for each $i >1$ and some projection $0 \neq r_i \in A \otimes \mathcal{K}$, where $[  \bullet  ]$ denotes the $\mathrm{K}_0$-class.  Set $r_1 = q_1$. By Theorem 2.2 of \cite{BPT}, the positive cone of $\mathrm{K}_0(A)$ embeds faithfully as the subsemigroup of compact elements in $\mathrm{Cu}(A)$, so that
\begin{equation}\label{qstrict}
\langle q_{i} \rangle = \langle q_{i-1} \rangle  + \langle r_i \rangle.
\end{equation}

From here on we abuse notation slightly and use $r_i$ to refer the the constant map $r_i:X \to A \otimes \mathcal{K}$ which is everywhere equal to $r_i \in A \otimes \mathcal{K}$.  We do likewise for $q_i$.  Since $r_i$ is everywhere nonzero on $X$ it follows from (\ref{asup}) and (\ref{qstrict}) that
\begin{equation}\label{qistrict}
d_\tau(q_i(x)) < d_\tau((f(x))
\end{equation}
and
\begin{equation}\label{ristrict}
d_\tau(q_i(x)) = d_\tau(q_{i-1}(x)) + d_\tau(r_i(x))
\end{equation}
for every $x \in X$.  It is well known that every extreme tracial state on 
\[
C(X,A) \cong C(X) \otimes A
\]
is of the form $\mathrm{ev}_x \otimes \gamma$, where $\mathrm{ev}_x$ denotes evaluation at $x \in X$ and $\gamma \in \mathrm{T}(A)$ is extreme.  We may thus conclude from (\ref{qistrict}) and (\ref{ristrict}) that
\begin{equation}\label{qifstrict}
d_\tau(q_i) < d_\tau(f)
\end{equation}
and 
\begin{equation}\label{sumstrict}
d_\tau(q_i) = d_\tau(q_{i-1}) + d_\tau(r_i)
\end{equation}
for every $\tau \in \mathrm{T}(C(X,A))$.
 
With (\ref{qifstrict}) and (\ref{sumstrict}) in hand we
apply Lemma \ref{split} with $q = r_1$ and $f$ as in the present Proposition. This yields a projection $p_1 \sim r_1$ and positive element $b_1$ in $\overline{f(C(X,A))f}$ such that $p_1 b_1 = 0$ and 
\[
d_\tau(f) = d_\tau(p_1) + d_\tau(b_1), \ \forall \tau \in \mathrm{T}(C(X,A)).
\]
Assume, inductively, that we have found in $\overline{f(C(X,A))f}$ mutually orthogonal projections $p_i \sim r_i$, $1 \leq i \leq n$,  and a positive element $b_n$ orthogonal to each $p_i$ such that
\begin{equation}\label{induct}
d_\tau(f) = d_\tau \left( \oplus_{i=1}^n p_i \right) + d_\tau(b_n) = \sum_{i=1}^n  d_\tau(p_i) + d_\tau(b_n).
\end{equation}
Appealing again to (\ref{qifstrict}) and (\ref{sumstrict}) we apply Lemma \ref{split} with $f=b_n$ and $q=r_{n+1}$.  We label the resulting projection and positive element as $p_{n+1}$ and $b_{n+1}$, respectively, and this yields (\ref{induct}) above with $n$ replaced by \mbox{$n+1$}.  Continuing inductively in this manner produces a sequence of mutually orthogonal projections $p_1,p_2,\ldots$ in $\overline{f(C(X,A))f}$.  Note that since $p_i \sim q_i$ we conclude from (\ref{qifstrict}) and (\ref{sumstrict}) that the last conclusion of the Proposition holds.

Set 
\[
b = \bigoplus_{i=1}^\infty \frac{1}{2^i} p_i.
\]
Since $b \in \overline{f(C(X,A))f}$ we have $b \precsim f$, whence 
\[
b(x) \precsim f(x) \sim a, \ \forall x \in X.
\]
It follows that
\[
d_\tau(b(x)) \leq d_\tau(f(x)) = d_\tau(a), \ \forall x \in X, \ \forall \tau \in \mathrm{T}(A).
\] 
Note that zero is an accumulation point of the spectrum of $b(x)$ for each $x \in X$, so that $b(x)$ is not compact in $\mathrm{Cu}(A)$.  

Now
\[
q_n \sim \oplus_{i=1}^n p_i \leq b 
\]
by construction so that
\[
d_\tau(q_n(x)) \leq d_\tau(b(x)), \ \forall \tau \in \mathrm{T}(A), \ \forall x \in X, \ \forall n \in \mathbb{N}.
\]
Since $\langle a \rangle = \sup_n \langle q_n(x) \rangle$ in $\mathrm{Cu}(A)$ we then have
\[
d_\tau(a) = \sup_n d_\tau(q_n(x)) \leq d_\tau(b(x)) \leq d_\tau(a), \ \forall \tau \in \mathrm{T}(A), \ \forall x \in X,
\]
and so 
\[ 
d_\tau(b(x)) = d_\tau(a), \  \forall \tau \in \mathrm{T}(A), \ \forall x \in X.
\]
$A$ is $\mathcal{Z}$-stable, and so has strict comparison of positive elements (\cite{Ro1}, \cite{TW2}).  Since neither $a$ nor $b$ is compact it follows that $\langle a \rangle= \langle b(x) \rangle$ in $\mathrm{Cu}(A)$ for each $x \in X$ (\cite{BPT}, Theorem 2.2).

Set 
\[
h(t) = (1-t)f +tb, \ t \in [0,1].
\]
Then,
\[
\langle h(0)(x) \rangle = \langle f(x) \rangle = \langle a \rangle
\]
and
\[
\langle h(1)(x) \rangle =  \langle b(x) \rangle = \langle a \rangle
\]
for every $x \in X$.
For $t \in (0,1)$ we note that $h(t) \in \overline{f(C(X,A))f}$ since each of $f$ and $b$ is, whence
\[
\langle h(t)(x) \rangle \leq \langle f(x) \rangle = \langle a \rangle.  
\]
On the other hand
\[
a \sim (1-t)a \precsim (1-t)a + tb(x) = h(t)(x),
\]
so that $ \langle h(t)(x) \rangle \geq \langle a \rangle$ for each $x \in X$ also.  We conclude that
\[
\langle h(t)(x) \rangle = \langle a \rangle, \ \forall t \in [0,1], \ \forall x \in X,
\]
as required.
\end{proof}

We record the following Corollary of (the proof of) Proposition \ref{projsum} for use in the proof of Theorem \ref{main} below.

\begin{cor}
Let $A$ and $a$ be as in Proposition \ref{projsum}. Suppose that 
\[
f,g \in C(X,A) \otimes \mathcal{K}
\]
are positive and satisfy 
\[
\langle f(x) \rangle = \langle g(x) \rangle = \langle a \rangle, \ \forall x \in X.
\]
Let $(p_{i,f})_{i \in \mathbb{N}}$ and $(p_{i,g})_{i \in \mathbb{N}}$ be the sequences of projections provided by Proposition \ref{projsum} for $f$ and $g$, respectively.  It follows that $p_{i,f} \sim p_{i,g}$ for every $i \in \mathbb{N}$.
\end{cor}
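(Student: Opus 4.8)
The plan is to exploit the fact that the construction of the projections $p_i$ in the proof of Proposition \ref{projsum} depends on the positive element under consideration only through its Cuntz class; once this is made precise, the Corollary drops out of transitivity of Cuntz equivalence. I would treat the case in which $\langle a \rangle$ is non-compact, this being the one invoked in the proof of Theorem \ref{main}, and the only one in which Proposition \ref{projsum} genuinely returns an infinite sequence of projections via the inductive argument given there.

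First I would observe that the proof of Proposition \ref{projsum} carries over with $C(X,A) \otimes \mathcal{K}$ in place of $C(X,A)$: that algebra is separable, $\mathcal{Z}$-stable, of real rank zero, stably finite, has weakly unperforated $\mathrm{K}_0$, and enjoys strict comparison, which is everything used. Next, recall that in that proof one selects, using real rank zero, a single sequence $q_1, q_2, \ldots$ of projections in $A \otimes \mathcal{K}$ with $q_i \neq q_{i+1}$, $\langle q_i \rangle \ll \langle q_{i+1} \rangle$ and $\sup_i \langle q_i \rangle = \langle a \rangle$, and then a single sequence $r_1, r_2, \ldots$ of projections in $A \otimes \mathcal{K}$ with $r_1 = q_1$ and $[q_i] = [q_{i-1}] + [r_i]$ in $\mathrm{K}_0(A)$ for $i > 1$. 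Both sequences can be chosen once and for all in terms of $\langle a \rangle$ alone, so I would fix them and feed the very same $(q_i)$ and $(r_i)$ into Proposition \ref{projsum} both when it is applied to $f$ and when it is applied to $g$. With this data fixed, the conclusion of Lemma \ref{split} forces every projection returned at the $i$-th stage of the induction to be Cuntz equivalent to $r_i$; hence, whatever intermediate choices are made, $p_{i,f} \sim r_i$ and $p_{i,g} \sim r_i$ for each $i$, and transitivity of $\sim$ then yields $p_{i,f} \sim p_{i,g}$.

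The single point needing care — and the one I would flag as the crux — is that the inductive construction can indeed be run with this common auxiliary data for both $f$ and $g$; that is, that at stage $n+1$ the hypothesis of Lemma \ref{split}, namely $d_\tau(r_{n+1}) < d_\tau(b_n)$ for all $\tau \in \mathrm{T}(C(X,A)\otimes\mathcal{K})$, holds for the relevant $b_n$ whether it arose from $f$ or from $g$. In the proof of Proposition \ref{projsum} this strict inequality is deduced purely from \eqref{qifstrict} and \eqref{sumstrict}, which involve only $\langle a\rangle = \langle f(x) \rangle = \langle g(x) \rangle$ together with the fixed $(q_i)$ and $(r_i)$ and the accumulated identity $d_\tau(f) = \sum_{i=1}^n d_\tau(p_i) + d_\tau(b_n)$; so the same chain of applications of Lemma \ref{split} is legitimate in both cases, and the argument closes. (If instead $\langle a \rangle$ were compact, the proof of Proposition \ref{projsum} reduces $f$ and $g$ to spectral projections of $f$ and $g$, and the analogous conclusion would further require these to represent the same class in $\mathrm{K}_0(C(X,A))$; this does not arise in the application to Theorem \ref{main}.)
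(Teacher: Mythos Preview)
Your proposal is correct and follows exactly the paper's line: the paper's proof simply observes that the constant maps $r_i$ chosen in Proposition~\ref{projsum} depend only on $\langle a\rangle$, so that by construction $p_{i,f}\sim r_i\sim p_{i,g}$. You have merely spelled out in more detail what ``by construction'' entails (in particular, that the same $(q_i)$ and $(r_i)$ can be fed into the inductive application of Lemma~\ref{split} for both $f$ and $g$), and noted the passage to $C(X,A)\otimes\mathcal{K}$.
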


\begin{proof}
The choice of the constant projection valued maps $r_i :X \to A \otimes \mathcal{K}$ in the proof of Proposition \ref{projsum} depends only on the Cuntz class $\langle a \rangle$ and by construction we have 
\[
p_{i,f} \sim r_i \sim p_{i,g}.
\]
\end{proof}

\section{The homotopy groups of Cuntz classes}\label{mainproof}

We review briefly here the preamble to the proof of Theorem 1.1 of \cite{To2}, adapted to our purpose.  This proof provides the framework for the proof of Theorem \ref{main}. 
Recall that a unital C$^*$-algebra $A$ is {\it strongly $\mathrm{K}_1$-surjective} if the canonical map 
\[
\mathcal{U}(B + \mathbb{C}(1_A)) \to \mathrm{K}_1(A)
\]
is surjective for every full hereditary subalgebra $B$ of $A$.
The following Lemma is due to R\o rdam (\cite{Ro1}, Lemma 6.3).

\begin{lem}\label{K1rep}
Let $A$ be a unital approximately divisible C$^*$-algebra.  Then $A$ is strongly $\mathrm{K}_1$-surjective.  In particular, for any full projection $p \in A$ and $g \in \mathrm{K}_1(A)$, there is a unitary $v \in pAp$ such that $g = [v+ (1_A-p)]_1$ in $\mathrm{K}_1(A)$.
\end{lem}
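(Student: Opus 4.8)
This is Lemma 6.3 of \cite{Ro1}, and we indicate the shape of the argument. The plan has two parts: first, to deduce the ``in particular'' clause from strong $\mathrm{K}_1$-surjectivity of $A$; second, to establish strong $\mathrm{K}_1$-surjectivity using the approximately central matrix subalgebras supplied by approximate divisibility. For the first part, let $p \in A$ be a full projection, so that $B := pAp$ is a full hereditary subalgebra (fullness because $A$ is simple). A direct computation with the relations $x^*x = xx^* = 1_A$ shows that every unitary $x \in B + \mathbb{C}(1_A)$ has the form $x = v + \lambda(1_A - p)$ for some $\lambda \in \mathbb{T}$ and some unitary $v \in pAp$; picking $\theta$ with $e^{i\theta} = \lambda$, the path $t \mapsto v + e^{it\theta}(1_A - p)$ lies in $\mathcal{U}(B + \mathbb{C}(1_A))$ and joins $x$ to $v + (1_A - p)$, so that $[x]_1 = [v + (1_A - p)]_1$ in $\mathrm{K}_1(A)$. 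Hence, granting strong $\mathrm{K}_1$-surjectivity and applying it to the full hereditary subalgebra $B = pAp$, for any $g \in \mathrm{K}_1(A)$ we obtain a unitary $x \in \mathcal{U}(B + \mathbb{C}(1_A))$ with $[x]_1 = g$, and rewriting $x$ as above produces the desired $v \in \mathcal{U}(pAp)$ with $g = [v + (1_A - p)]_1$.

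For strong $\mathrm{K}_1$-surjectivity, fix a full hereditary subalgebra $B \subseteq A$ and a class $g \in \mathrm{K}_1(A) \cong \mathrm{K}_1(B + \mathbb{C}(1_A))$ (the isomorphism coming from the Morita equivalence between $A$ and the full hereditary $B$), and represent $g$ by a unitary $u \in M_k(B + \mathbb{C}(1_A))$ for some $k \in \mathbb{N}$. The task is to ``absorb'' this matrix amplification, that is, to show that $u$ is homotopic, within the unitary group of $M_k(B + \mathbb{C}(1_A))$, to $w \oplus 1_{k-1}$ for a single unitary $w \in B + \mathbb{C}(1_A)$, since then $g = [u]_1 = [w]_1$. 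Approximate divisibility provides the room to do this: choosing $N$ large with $k \mid N$, we get an approximately central unital $*$-homomorphism $\phi_N \colon M_N(\mathbb{C}) \oplus M_{N+1}(\mathbb{C}) \to A$, and restricting it to $M_k(\mathbb{C}) \otimes 1$ inside $M_N(\mathbb{C}) \cong M_k(\mathbb{C}) \otimes M_{N/k}(\mathbb{C})$ yields an approximately central unital copy of $M_k(\mathbb{C})$ inside a corner $pAp$ with $p := \phi_N(1_{M_N} \oplus 0)$ nonzero --- hence, by simplicity of $A$, full --- after passing to the second summand and an $N$ with $k \mid N+1$ should $\phi_N$ happen to annihilate $M_N(\mathbb{C})$. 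Thus $A$ admits approximately central unital embeddings of $M_k(\mathbb{C})$ into full corners for every $k$, which is all that is used; the second summand $M_{N+1}(\mathbb{C})$ in the definition of approximate divisibility is more than one needs here. Using these approximately central matrix units, together with the fullness of $B$ (so that the construction can be kept inside $B + \mathbb{C}(1_A)$ and not merely inside $A$), one spreads $u$ out over the matrix units and then compresses, obtaining in the limit the homotopy to $w \oplus 1_{k-1}$; because the unitaries carrying out this reorganization lie close to elements commuting with $B$, $\mathrm{K}_1$-classes are preserved.

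The step we expect to be the main obstacle is exactly this absorption. The copy of $M_k(\mathbb{C})$ extracted from $M_N(\mathbb{C}) \oplus M_{N+1}(\mathbb{C})$ is unital in only one summand, and approximate centrality is an approximate rather than an exact relation, so one must carry norm errors along and discharge them at the end via the standard fact that unitaries close in norm are homotopic; more delicate still is keeping every intermediate object inside $B + \mathbb{C}(1_A)$, and it is precisely there that the fullness hypothesis on $B$ is used. This bookkeeping, together with the routine identifications needed to pass between matrix amplifications and corners while preserving $\mathrm{K}_1$-classes, is the content of Lemma 6.3 of \cite{Ro1}, which we invoke.
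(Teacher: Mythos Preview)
The paper does not give its own proof of this lemma; it simply records it as R\o rdam's result (\cite{Ro1}, Lemma~6.3), exactly as you do, so your approach matches the paper's. Your added sketch goes beyond what the paper offers. One small remark on that sketch: you twice invoke simplicity of $A$, but simplicity is not a hypothesis of the lemma as stated. In the first instance this is harmless, since a full projection $p$ already makes $pAp$ a full hereditary subalgebra without any simplicity assumption; in the second instance (fullness of $\phi_N(1_{M_N}\oplus 0)$) it is not obviously harmless, but since you explicitly defer the details to R\o rdam's proof this does not create a genuine gap in your proposal.
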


Proposition 3.10 of \cite{BKR} states that a unital approximately divisible C$^*$-algebra has cancellation, so Lemma 4.2 of \cite{To2} does not require simplicity:

\begin{lem}\label{projhom}
Let $A$ be a unital approximately divisible C$^*$-algebra and let $p, q$ be full projections in $A$ with $[p] =[q]$ in $\mathrm{K}_0(A)$.  It follows that there is a continuous path of unitaries $(u_t)_{t \in [0,1]}$ in $A$ such that $u_0 = 1_A$ and $u_1 p u_1^* = q$.  In particular, $p$ and $q$ are homotopic via projections in $A$.
\end{lem}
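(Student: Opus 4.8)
The plan is to reduce the $\mathrm{K}$-theoretic hypothesis to Murray--von Neumann equivalence, assemble from it a single unitary of $A$ that conjugates $p$ to $q$, correct that unitary so that it carries the trivial $\mathrm{K}_1$-class, and finally appeal to the non-stable $\mathrm{K}$-theory of approximately divisible algebras to join it to $1_A$ by a path of unitaries. First, since $A$ is approximately divisible it has cancellation of projections (Proposition 3.10 of \cite{BKR}), so the hypothesis $[p]=[q]$ in $\mathrm{K}_0(A)$ produces a partial isometry $v\in A$ with $v^*v=p$ and $vv^*=q$; since moreover $[1_A-p]=[1_A]-[p]=[1_A]-[q]=[1_A-q]$, cancellation also produces $w\in A$ with $w^*w=1_A-p$ and $ww^*=1_A-q$. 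Using the identities $qv=v=vp$, $(1_A-q)w=w=w(1_A-p)$, and the orthogonality of the source and range projections, one checks routinely that $u:=v+w$ is a unitary of $A$ with $upu^*=q$.

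Next I would kill the obstruction $[u]_1\in\mathrm{K}_1(A)$. The projection $q$ is full, so Lemma \ref{K1rep} supplies a unitary $z\in qAq$ with $[z+(1_A-q)]_1=-[u]_1$. Setting $u':=(z+(1_A-q))u$, we obtain a unitary of $A$ with $[u']_1=0$, and because $z$ is a unitary of the corner $qAq$ we compute $u'pu'^*=(z+(1_A-q))\,q\,(z+(1_A-q))^*=zqz^*=q$.

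Finally, approximately divisible C$^*$-algebras are $\mathcal{Z}$-stable, and for $\mathcal{Z}$-stable C$^*$-algebras the canonical map from $\mathcal{U}(A)/\mathcal{U}_0(A)$ (unitaries modulo the connected component of $1_A$) to $\mathrm{K}_1(A)$ is injective, so $[u']_1=0$ forces $u'\in\mathcal{U}_0(A)$; choosing a continuous path of unitaries $(u_t)_{t\in[0,1]}$ in $A$ with $u_0=1_A$ and $u_1=u'$, the path $t\mapsto u_tpu_t^*$ then joins $p$ to $q$ through projections of $A$, which proves both assertions. Beyond cancellation and Lemma \ref{K1rep}, the one genuinely external input is the $\mathrm{K}_1$-injectivity used in this last step, and since $A$ is not assumed simple this is the point I would take most care with; it is nonetheless available from the structure theory of $\mathcal{Z}$-absorbing C$^*$-algebras and is closely tied to the non-stable $\mathrm{K}$-theory developed in \cite{BKR} and \cite{Ro1}.
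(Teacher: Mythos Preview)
Your argument is correct and is precisely the standard route: cancellation (from \cite{BKR}, Proposition 3.10) to obtain a conjugating unitary, strong $\mathrm{K}_1$-surjectivity (Lemma \ref{K1rep}) to kill its $\mathrm{K}_1$-class, and $\mathrm{K}_1$-injectivity of $\mathcal{U}(A)/\mathcal{U}_0(A)\to\mathrm{K}_1(A)$ for approximately divisible (hence $\mathcal{Z}$-stable) algebras to connect to $1_A$. The paper does not write out a proof here but simply invokes Lemma 4.2 of \cite{To2} together with the observation that simplicity there was only used to obtain cancellation; your proof is the unpacked version of that citation and uses the same ingredients.
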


%\begin{proof}
%Since $[p] = [q]$ in $\mathrm{K}_0(A)$, we have also that $[1_A - p] = [1_A - q]$.  The approximate divisibility of $A$ implies that $A$ has stable rank one, hence enjoys cancellation of projections (see the main result of \cite{BKR}).  Thus, $p \sim q$ and $1_A-p \sim 1_A-q$ in the Murray-von Neumann semigroup.  It follows that there is a unitary $v \in \mathcal{U}(A)$ such that $vpv^* = q$.  We now modify $v$ to ensure that it is in the connected component of $1_A \in \mathcal{U}(A)$.  

%If $p=1_A$ there is nothing to prove, so we may assume that $1_A-p$ is full.  Set $g = [v]_1$ in $\mathrm{K}_1(A)$.  Apply Lemma \ref{K1rep} with $1_A-p$ in place of $p$ to find a unitary $w \in \mathcal{U}((1_A-p)A(1_A-p) + p)$ such that $[w]_1 = -g$ in $\mathrm{K}_1(A)$.  Now $[vw]_1 = 0$ in $\mathrm{K}_1(A)$ and $A$ has stable rank one, so there is a continuous path $(u_t)_{t \in [0,1]}$ in $\mathcal{U}(A)$ such that $u_0=1_A$ and $u_1 = vw$.  Observing that
%\[ 
%u_1 p u_1^* = (vw) p (vw)^* = vwpw^*v^* = vpv^* = q
%\]
%completes the proof.
%\end{proof}

We can now prove our main result.

\begin{proof} 
(Theorem \ref{main})
Let $A$ be a simple unital separable exact C$^*$-algebra which is approximately divisible and of real rank zero.  Since $A$ is $\mathcal{Z}$-stable by Theorem 2.3 of \cite{TW2}, it is either stably finite (when it has a trace) or purely infinite (when it is traceless) (\cite{Ro1}).  

Consider first the case where $A$ is purely infinite, and let $a \in A$ be a nonzero positive element.  Let $f,g \in C(X,A)$ be positive and satisfy
\begin{equation}\label{cuntzequal}
\langle f(x) \rangle = \langle a \rangle = \langle g(x) \rangle, \ \forall x \in X.
\end{equation}
Since $A$ is purely infinite, the Cuntz classes of any two nonzero positive elements are the same.  It follows from (\ref{cuntzequal}) that $f$ and $g$ are everywhere nonzero, whence so is the homotopy $(1-t)f +tg$ for every $t \in [0,1]$.  This proves the theorem in the case that $A$ is purely infinite.

It remains to consider the situation where $A$ is stably finite and $f, g \in (C(X,A))_+$ satisfy (\ref{cuntzequal}) for some $a \in A_+$ with $\langle a \rangle$ non-compact. Let $(p_{i,f})_{i \in \mathbb{N}}$ and $(p_{i,g})_{i \in \mathbb{N}}$ be the sequences of projections provided by Proposition \ref{projsum} for $f$ and $g$, respectively.  In order to access the machinery of the proof of Theorem 1.1 of \cite{To2} we relabel the sequences as follows:
\[
p_i := p_{i.f}, \ q_i:= p_{i,g}.
\]
Appealing to Proposition \ref{projsum} we may now simply assume without loss of generality that
\begin{equation}\label{fg}
f = \bigoplus_{i=1}^\infty \frac{1}{2^i} p_i \ \mathrm{and} \ g = \bigoplus_{i=1}^\infty \frac{1}{2^i} q_i,
\end{equation}
where the sequences $(p_i)_{i \in \mathbb{N}}$ and $(q_i)_{i \in \mathbb{N}}$ satisfy:
\begin{enumerate}
\item[(i)] $p_ip_j = 0 = q_iq_j$ for every pair $i \neq j$;
\item[(ii)] $p_i \sim q_i$ for each $i \in \mathbb{N}$;
\item[(iii)] $p_i,q_i \neq 0, 1_A$ for each $i \in \mathbb{N}$;
\item[(iv)] $p_i$ and $q_i$ are full for each $i \in \mathbb{N}$.
\end{enumerate}
From this point the construction of the desired homotopy between $f$ and $g$ follows the construction of the homotopy between the elements $a$ and $b$ in the proof of Theorem 1.1 of \cite{To2} verbatim---these elements $a$ and $b$ have the form of $f$ and $g$, respectively, as in (\ref{fg}), and satisfy (i)--(iv) above.  This establishes Theorem \ref{main}.
\end{proof}


\begin{thebibliography}{100}


\bibitem{APRT} Antoine, Ramon; Perera, Francesc; Robert, Leonel; Thiel, Hannes, \mbox{ C$^*$-algebras} of stable rank one and their Cuntz semigroups. {\it Duke Math. J.} 171 (2022), no. 1, 33–99. 

%\bibitem{APS}  Antoine, Ramon; Perera, Francesc; Santiago, Luis Pullbacks, C(X)-algebras, and their Cuntz semigroup. {\it J. Funct. Anal.} 260 (2011), no. 10, 2844–2880. 

\bibitem{APT}  Ara, Pere; Perera, Francesc; Toms, Andrew S. K-theory for operator algebras. Classification of C$^*$-algebras. {\it Aspects of operator algebras and applications}, 1–71, Contemp. {\it Math., 534, Amer. Math. Soc., Providence, RI}, 2011.

\bibitem{A}  Asadi, Mohammad B.; Asadi-Vasfi, M. Ali: The radius of comparison of the tensor product of a C$^*$-algebra with $C(X)$. {\it J. Operator Theory} 86 (2021), no. 1, 31–50. 

\bibitem{BKR} Blackadar, Bruce; Kumjian, Alexander; Rørdam, Mikael, Approximately central matrix units and the structure of noncommutative tori. {\it K-Theory} 6 (1992), no. 3, 267–284.

\bibitem{Betc}  Blackadar, Bruce; Robert, Leonel; Tikuisis, Aaron P.; Toms, Andrew S.; Winter, Wilhelm, An algebraic approach to the radius of comparison. {\it Trans. Amer. Math. Soc.} 364 (2012), no. 7, 3657–3674.

\bibitem{BPT}  Brown, Nathanial P.; Perera, Francesc; Toms, Andrew S., The Cuntz semigroup, the Elliott conjecture, and dimension functions on \mbox{C$^*$-algebras.} {\it J. Reine Angew. Math.} 621 (2008), 191–211. 

%\bibitem{CETWW} Castillejos, Jorge; Evington, Samuel; Tikuisis, Aaron; White, Stuart; Winter, Wilhelm, Nuclear dimension of simple C$^*$-algebras. {\it Invent. Math.} 224 (2021), no. 1, 245–290.

\bibitem{CEI} Coward, Kristofer T.; Elliott, George A.; Ivanescu, Cristian, The Cuntz semigroup as an invariant for C$^*$-algebras. {\it J. Reine Angew. Math.} 623 (2008), 161–193. 

\bibitem{Cuntz1} Cuntz, Joachim, Dimension functions on simple C$^*$-algebras. {\it Math. Ann.} 233 (1978), no. 2, 145–153. 

\bibitem{E} Elliott, George A., On the classification of C$^*$-algebras of real rank zero. {\it J. Reine Angew. Math. } 443 (1993), 179–219.

\bibitem{EE}  Elliott, George A.; Evans, David E., The structure of the irrational rotation C$^*$-algebra. {\it Ann. of Math. (2)} 138 (1993), no. 3, 477–501.

\bibitem{ET}  Elliott, George A.; Toms, Andrew S., Regularity properties in the classification program for separable amenable C$^*$-algebras. {\it Bull. Amer. Math. Soc. (N.S.)} 45 (2008), no. 2, 229–245.

\bibitem{H} Haagerup, Uffe: Quasitraces on exact C$^*$-algebras are traces. {\it C. R. Math. Acad. Sci. Soc. R. Can. } 36 (2014), no. 2-3, 67–92.

\bibitem{KR} Kirchberg, Eberhard; R\o rdam, Mikael: Non-simple purely infinite C$^*$-algebras. {\it Amer. J. Math.} 122 (2000), no. 3, 637–666.

\bibitem{P} Perera, Francesc: The structure of positive elements for C$^*$-algebras with real rank zero.{\it Internat. J. Math. } 8 (1997), no. 3, 383–405.

\bibitem{Ri} Rieffel, Marc A.: The homotopy groups of the unitary groups of noncommutative tori. {\it J. Operator Theory } 17 (1987), no. 2, 237–254. 

\bibitem{Ro1}  Rørdam, Mikael, The stable and the real rank of $\mathcal{Z}$-absorbing \mbox{ C$^*$-algebras.} {\it Internat. J. Math.} 15 (2004), no. 10, 1065–1084.

\bibitem{To1} Toms, Andrew S., On the classification problem for nuclear \mbox{ C$^*$-algebras.} {\it Ann. of Math. (2)} 167 (2008), no. 3, 1029–1044.

\bibitem{To2} Toms, Andrew S., Homotopies of constant Cuntz class, {\it  Int. Math. Res. Not. IMRN }, to appear

%\bibitem{TW1}  Toms, Andrew S.; Winter, Wilhelm, Strongly self-absorbing \mbox{C$^*$-algebras.} {\it Trans. Amer. Math. Soc.} 359 (2007), no. 8, 3999–4029. 

\bibitem{TW2}  Toms, Andrew S.; Winter, Wilhelm $\mathcal{Z}$-stable ASH algebras. {\it Canad. J. Math.} 60 (2008), no. 3, 703–720.

\bibitem{TW3} Toms, Andrew S.; Winter, Wilhelm: The Elliott conjecture for Villadsen algebras of the first type. {\it J. Funct. Anal.} 256 (2009), no. 5, 1311–1340. 

\bibitem{W1}  Winter, Wilhelm, Nuclear dimension and $\mathcal{Z}$-stability of pure \mbox{C$^*$-algebras.} {\it Invent. Math.} 187 (2012), no. 2, 259–342.

\bibitem{Z1}  Zhang, Shuang: Matricial structure and homotopy type of simple C$^*$-algebras with real rank zero. {\it J. Operator Theory } 26 (1991), no. 2, 283–312.

\end{thebibliography}
\end{document}